\newtheorem{Result}{Result}
\newtheorem{Thm}  [Result]{Theorem}
\newtheorem{Prop} [Result]{Proposition}
\newtheorem{Lemma}[Result]{Lemma}
\newtheorem*{Def}{Definition}
\def\N{{\mathbb{N}}}
\let\eps\varepsilon
\let\maparrow\longrightarrow
\def\map#1#2#3{#1\colon#2\,{\maparrow}\,#3}
\def\ptensor{\mathop{\hat\otimes_\pi}}
\def\itensor{\mathop{\hat\otimes_\eps}}
\begin{document}

\title{On subprojectivity of $C(K,X)$}

\author{Manuel Gonz\'alez}
\address{Departamento de Matem\'aticas, Facultad de Ciencias,
  Universidad de Cantabria, \hbox{E-39071} Santander, Spain}

\author{Javier Pello}
\address{Escuela Superior de Ciencias Experimentales y Tecnolog\'\i a,
  Universidad Rey Juan Carlos, \hbox{E-28933} M\'ostoles, Spain}

\subjclass[2010]{46B03}
\keywords{Banach space; subprojective space}

\begin{abstract}
We show that the Banach space $C(K,X)$ is subprojective
if $K$ is scattered and $X$ is subprojective.
\end{abstract}

\maketitle


A Banach space~$X$ is called \emph{subprojective} if every closed
infinite-dimensional subspace of~$X$ contains an infinite-dimensional
subspace that is complemented in~$X$.
Subprojective spaces were introduced by Whitley~\cite{whitley} to give
conditions for the conjugate of an operator to be strictly singular.
They are relevant in the study of the perturbation classes problem for
semi-Fredholm operators, which has a negative answer in
general~\cite{gonzalez} but a positive answer when one of the spaces is
subprojective \cite{gonzalez-et-al}. Recently Oikhberg and Spinu made a
systematic study of subprojective spaces \cite{oikhberg-spinu}, widely
increasing the family of known examples of spaces in this class.

Here we prove that
$C(K,X)$ is subprojective whenever $K$ is a scattered compact and
$X$ is a subprojective Banach space;
a compact space~$K$ is said to be \emph{scattered} or \emph{dispersed}
if every non-empty closed subset of~$K$ contains an isolated point.
In the case where $K = [0,\lambda]$ for some ordinal~$\lambda$,
this result was previously obtained for countable~$\lambda$
\cite[Theorem 4.1]{oikhberg-spinu} and later for arbitrary~$\lambda$
\cite[Theorem 2.9]{galego-MG-JP}.

This generalises the scalar case, where it was already known that
$C(K)$ is subprojective if and only if $K$ is scattered
\cite[Theorem~11]{lotz-peck-porta} \cite[Main Theorem]{pelczynski-semadeni},
and fully characterises the subprojectivity of~$C(K,X)$, as
the subprojectivity of $C(K,X)$ implies that
of $C(K)$ and~$X$ \cite[Proposition 2.1]{oikhberg-spinu}.

We will use standard notation. 
If $(x_n)_{n\in\N}$ is a sequence in~$X$, then $[x_n:n\in\N]$ will denote
the closed linear span of $(x_n)_{n\in\N}$ in~$X$.
Given a (bounded, linear) operator $\map TXY$, 
$N(T)$ and~$R(T)$ denote the kernel and the range of~$T$, respectively.
An operator $\map TXY$ is strictly singular if $T|_M$ is an isomorphism only
if $M$ is finite-dimensional.

The injective tensor product of $X$ and~$Y$ is denoted by $X\itensor Y$.
Note that $C(K,X)$ can be identified with $C(K) \itensor X$
\cite[Section 3.2]{ryan}, and they will be used interchangeably in the sequel.
For countable ordinals $\lambda$, $\mu$ it was proved in \cite{galego-samuel}
that the projective tensor product $C([0,\lambda]) \ptensor C([0,\mu])$ is
subprojective.



Let $K$, $L$ be compact spaces and let $\map\varphi KL$ be a continuous
function. It is well known that $\varphi$ induces an operator $C(L) \maparrow
C(K)$ that maps each $f \in C(L)$ to $f\circ\varphi \in C(K)$; we will
denote this operator by $\map{\tilde\varphi}{C(L)}{C(K)}$.

\begin{Def}
Let $X$ be a Banach space and let $Z$ be a subspace of~$X$.
We will say that $Z$ is subprojective with respect to~$X$ if
every closed infinite-dimensional subspace of~$Z$ contains an
infinite-dimensional subspace complemented in~$X$.
\end{Def}

Note that this is a stronger notion for~$Z$ than merely being subprojective,
as it requires the subspace to be complemented in~$X$ and not just in~$Z$.
Also, a space~$X$ is subprojective if and only if each of its subspaces
is subprojective with respect to~$X$.

\begin{Prop}
\label{prev}
Let $X$ be a Banach space, let $\map PXX$ be a projection such that
$R(P)$ is subprojective and let $Z$ be a closed subspace of~$X$ such that
$P(Z) \subseteq Z$ and $Z \cap N(P)$ is subprojective with respect to~$X$.
Then $Z$ is subprojective with respect to~$X$.
\end{Prop}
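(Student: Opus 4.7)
The plan is to pick an arbitrary closed infinite-dimensional subspace $M \subseteq Z$ and produce an infinite-dimensional subspace of $M$ complemented in~$X$, splitting into cases according to whether the restriction $P|_M$ is strictly singular.

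If $P|_M$ is not strictly singular, then by definition there is a closed infinite-dimensional $M_0 \subseteq M$ on which $P$ is an isomorphism, so $P(M_0)$ is a closed infinite-dimensional subspace of $R(P)$. Subprojectivity of $R(P)$ yields an infinite-dimensional $N \subseteq P(M_0)$ complemented in $R(P)$ by some projection $Q_0$; composing with~$P$, which projects $X$ onto $R(P)$, gives a projection $Q_0 \circ P \colon X \to N$. Pulling back through the isomorphism $P|_{M_0}$, the subspace $N_1 := (P|_{M_0})^{-1}(N) \subseteq M$ is infinite-dimensional and $\pi := (P|_{M_0})^{-1} \circ Q_0 \circ P \colon X \to N_1$ is the desired bounded projection onto~$N_1$.

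If $P|_M$ is strictly singular, a standard basic-sequence-plus-Mazur construction produces a normalised basic sequence $(x_n)$ in~$M$ with $\sum \|P(x_n)\|$ so small that the small perturbation principle applies: setting $y_n := x_n - P(x_n) = (I-P)(x_n)$, the sequence $(y_n)$ is basic and equivalent to $(x_n)$, and $U := (I-P)|_{[x_n:n\in\N]}$ is an isomorphism onto $[y_n:n\in\N]$. Each $y_n$ lies in $Z \cap N(P)$: indeed $x_n \in Z$ and $P(x_n) \in P(Z) \subseteq Z$, so $y_n \in Z$, and $P(y_n) = P(x_n) - P^2(x_n) = 0$. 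By the hypothesis on $Z \cap N(P)$, its closed infinite-dimensional subspace $[y_n:n\in\N]$ contains an infinite-dimensional $N$ complemented in~$X$ by some projection $Q \colon X \to N$.

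The main obstacle is then transferring this complementation back to a subspace of~$M$. Let $M' := U^{-1}(N) \subseteq [x_n:n\in\N] \subseteq M$, which is infinite-dimensional, and define $\rho \colon X \to M'$ by $\rho(x) := U^{-1}\bigl(Q((I-P)(x))\bigr)$. This is well-defined and bounded, because $Q((I-P)(x)) \in N \subseteq [y_n:n\in\N]$, on which $U^{-1}$ is defined; and for $x \in M'$, $(I-P)(x) = U(x) \in N$ yields $Q((I-P)(x)) = U(x)$ and hence $\rho(x) = x$, so $\rho$ is a projection onto~$M'$. The identity $U = (I-P)|_{[x_n:n\in\N]}$ is precisely what makes the pulled-back projection compatible with~$Q$.
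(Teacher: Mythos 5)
Your argument is correct and uses essentially the same two mechanisms as the paper's proof: when $P$ is an isomorphism on a subspace of $M$, pull back a complemented subspace of $R(P)$ through $P$, and otherwise use a small-perturbation argument to replace a basic sequence $(x_n)$ in $M$ by $((I-P)(x_n))$ inside $Z \cap N(P)$ and transfer the complementation back. The only cosmetic differences are that you split cases by strict singularity of $P|_M$ rather than by the dimension of $M \cap N(P)$ and the closedness of $P(M)$, and that you build the final projection $\rho$ by hand where the paper conjugates by the global automorphism $I-K$.
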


\begin{proof}
Let $M$ be a closed infinite-dimensional subspace of~$Z$.
If $M \cap N(P)$ is infinite-dimensional, then it contains
another infinite-dimensional subspace complemented in~$X$ by hypothesis.

Otherwise, if $M \cap N(P)$ is finite-dimensional, we can assume that
$M \cap N(P) = 0$ by passing to a further subspace of~$M$ if necessary.
If $P(M)$ is closed, then $P|_M$ is an isomorphism and,
if $N$ is an infinite-dimensional subspace of~$M$ such that $R(P) =
P(N) \oplus H$ for some closed subspace~$H$, then $X = N \oplus P^{-1}(H)$.

We are left with the case where $M \cap N(P) = 0$ and $P(M)$ is not closed.
Take a normalised sequence $(x_n)_{n\in\N}$ in~$M$ such that $\|P(x_n)\| <
2^{-n}$ for every $n\in\N$. Since any weak cluster point of $(x_n)_{n\in\N}$
must be in $M \cap N(P) = 0$, by passing to a subsequence
\cite[Theorem~1.5.6]{kalton-albiac} we can assume that $(x_n)_{n\in\N}$
is a basic sequence and that there exists a bounded sequence
$(x^*_n)_{n\in\N}$ in~$X^*$ such that $x^*_i(x_j) = \delta_{ij}$ for every
$i$, $j\in\N$ and $\sum_{n=1}^\infty \|x^*_n\| \|P(x_n)\| < 1$. Then $K(x) =
\sum_{n=1}^\infty x^*_n(x) P(x_n)$ defines an operator $\map KXX$ with
$\|K\| < 1$ that maps $K(x_n) = P(x_n)$ for every $n\in\N$, and then
$I - K$ is an automorphism on~$X$ such that $[ (I{-}K)(x_n) : n\in\N ] =
[ (I{-}P)(x_n) : n\in\N ] \subseteq Z \cap N(P)$ and, if $N$ is an
infinite-dimensional subspace of $[ (I{-}K)(x_n) : n\in\N ]$ complemented
in~$X$, then $(I{-}K)^{-1}(N) \subseteq M$ and is still complemented in~$X$.
\end{proof}

\begin{Prop}
\label{crux}
Let $X$ be a subprojective Banach space, let $\lambda$ be an ordinal,
let $K$ be a sequentially compact space and let $\map\varphi K{[0,\lambda]}$
be a continuous surjection. Then $\tilde\varphi(C([0,\lambda])) \itensor X$
is subprojective with respect to $C(K,X)$.
\end{Prop}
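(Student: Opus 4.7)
The plan is to prove the proposition by transfinite induction on~$\lambda$, applying Proposition~\ref{prev} at each step; throughout, set $Z := \tilde\varphi(C([0,\lambda])) \itensor X$. The base case $\lambda = 0$ is immediate: $Z$ consists of the constant $X$-valued functions on~$K$, which form a subspace complemented in~$C(K,X)$ by evaluation at any point, so the subprojectivity of~$X$ transfers directly.

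For a successor $\lambda = \mu+1$, the point $\mu+1$ is isolated in~$[0,\mu+1]$, so $A := \varphi^{-1}(\mu+1)$ is clopen in~$K$; let $B := K \setminus A$, so that $\varphi|_B\colon B \to [0,\mu]$ is a continuous surjection from a sequentially compact space. I apply Proposition~\ref{prev} with the projection $P(F) := F(a_0)\,\mathbf{1}_A$ for some fixed $a_0 \in A$: its range is isomorphic to~$X$ (hence subprojective); from $\mathbf{1}_A = \mathbf{1}_{\{\mu+1\}} \circ \varphi$ one sees $\mathbf{1}_A \in \tilde\varphi(C([0,\mu+1]))$, and so $P(Z) \subseteq Z$; and $Z \cap N(P)$ identifies with $\tilde{(\varphi|_B)}(C([0,\mu])) \itensor X$ (embedded in $C(K,X)$ by extension by zero on~$A$). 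The inductive hypothesis gives subprojectivity of this kernel with respect to~$C(B,X)$, and hence with respect to~$C(K,X)$ since $B$ is clopen.

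The limit case is the main work. Pick $k^* \in \varphi^{-1}(\lambda)$ and apply Proposition~\ref{prev} with $P(F) := F(k^*)\,\mathbf{1}_K$: its range consists of constants ($\cong X$) and $P(Z) \subseteq Z$. This reduces matters to showing that $W := Z \cap N(P)$, consisting of the $f \circ \varphi$ with $f(\lambda) = 0$, is subprojective with respect to~$C(K,X)$. Given an infinite-dimensional $M \subseteq W$, extract a basic sequence $(F_n) = (f_n \circ \varphi) \subseteq M$ and, using continuity of~$f_n$ at~$\lambda$, pick $\alpha_n < \lambda$ with $\|f_n|_{(\alpha_n,\lambda]}\| < 2^{-n}$. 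When $\alpha_\infty := \sup_n \alpha_n < \lambda$ --- which is forced whenever $\lambda$ has uncountable cofinality --- every $F_n$ is essentially supported on the clopen set $U := \varphi^{-1}([0,\alpha_\infty])$; the perturbations $F_n\,\mathbf{1}_U$ lie in $\tilde{(\varphi|_U)}(C([0,\alpha_\infty])) \itensor X$, and the inductive hypothesis (with $\alpha_\infty < \lambda$), combined with $C(U,X)$ being complemented in $C(K,X)$ and a standard basic-sequence perturbation, produces an infinite-dimensional subspace of~$[F_n]$ complemented in~$C(K,X)$.

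The remaining possibility is $\lambda$ of countable cofinality with $\alpha_n$ cofinal in~$\lambda$. Passing to a subsequence assume $\alpha_n \nearrow \lambda$ strictly; then $K = \bigsqcup_n K_n \sqcup \varphi^{-1}(\lambda)$ clopenly, with $K_n := \varphi^{-1}(I_n)$ and $I_n$ the clopen intervals between consecutive~$\alpha_n$'s, and $W$ identifies with the $c_0$-sum of the spaces $\tilde{(\varphi|_{K_n})}(C(I_n)) \itensor X$. A gliding-hump/block-basis argument produces either a block-basic subsequence in~$[F_n]$ essentially supported on finitely many summands (reducing to the previous case) or an infinite block-basic subsequence $(G_j) \subseteq [F_n]$ with essentially disjoint supports on clopen sets~$L_j$ involving $K_n$'s with $n$-indices tending to infinity, spanning a copy of~$c_0$. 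The latter is the main obstacle: the plan is to choose peak points $p_j \in L_j$ with $\|G_j(p_j)\|$ bounded below, invoke sequential compactness of~$K$ to pass to a subsubsequence with $p_j \to p^* \in \varphi^{-1}(\lambda)$ (forced because $\varphi(p_j) \to \lambda$), pick uniformly bounded $x^*_j \in X^*$ with $x^*_j(G_j(p_j)) = 1$, and define $Q(G) := \sum_j x^*_j\bigl(G(p_j) - G(p^*)\bigr)\,G_j$. Subtracting $G(p^*)$ is essential: continuity of~$G$ at~$p^*$ forces the coefficients to tend to zero, which both makes the series converge in $C(K,X)$ (in the $c_0$-sense induced by the disjoint supports) and ensures continuity of~$Q(G)$ at every point of~$\varphi^{-1}(\lambda)$, where every~$G_j$ vanishes.
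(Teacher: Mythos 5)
Your proposal is correct and follows essentially the same route as the paper's proof: transfinite induction combined with Proposition~\ref{prev} applied to the evaluation projection at a point of $\varphi^{-1}(\lambda)$ to reduce to $\tilde\varphi(C_0([0,\lambda]))\itensor X$, a dichotomy between sequences essentially supported below some $\mu<\lambda$ (handled by the inductive hypothesis via the truncation projections) and sequences escaping to~$\lambda$, and, in the latter case, exactly the paper's key device of peak points $p_j$, sequential compactness to get $p_j\to p^*\in\varphi^{-1}(\lambda)$, and the map $G\mapsto\sum_j x^*_j(G(p_j)-G(p^*))G_j$ onto an essentially disjointly supported copy of~$c_0$. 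The only differences are organizational (a separate successor case, and phrasing the dichotomy via the supports $\alpha_n$ of a fixed basic sequence rather than via strict singularity of the $P_\mu$), plus the routine caveat that the $2^{-n}$ bounds must be tightened relative to the basis constant for the small-perturbation steps, as the paper does with its $2^{-n}/32$.
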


Here we are using the identification $C(K,X) \equiv C(K) \itensor X$ and
the fact that, if $X$ and $Y$ are Banach spaces and $M$ is a closed subspace
of~$X$ and $N$ is a closed subspace of~$Y$, then $M \itensor N$ can be seen
as a subspace of $X \itensor Y$ \cite[Comments after Proposition 3.2]{ryan}.

\begin{proof}
We will proceed by induction in~$\lambda$. The result is trivial for
$\lambda = 0$, as then $\tilde\varphi(C([0,\lambda])) \itensor X$ is the
set of constant functions, which is complemented in~$C(K,X)$ and isomorphic
to~$X$, which is subprojective.

Let us then assume that the result is true for every continuous
surjection $K\maparrow[0,\mu]$ with $\mu < \lambda$.
Consider, for each ordinal $\mu < \lambda$, the set $K_\mu =
\varphi^{-1}([0,\mu]) \subseteq K$, which is both open and closed,
and the operator $\map{P_\mu}{C(K,X)}{C(K,X)}$ given by $P_\mu(f) =
f \chi_{K_\mu}$, which is a projection with range isometric to $C(K_\mu,X)$;
also note that
$P_\mu(f) \mathrel{\setbox0\hbox{$\textstyle\longrightarrow$}
  \dimen0 \ht0 \advance\dimen0 -\fontdimen22\textfont2
  \textstyle\mathop{\vcenter to 2\dimen0{\box0\vss}}\limits_\mu} f$
for every $f\in\tilde\varphi(C_0([0,\lambda])) \itensor X$.

We will first prove that $\tilde\varphi(C_0([0,\lambda])) \itensor X$
is subprojective with respect to $C(K,X)$.
Let $M$ be a closed infinite-dimensional subspace of
$\tilde\varphi(C_0([0,\lambda])) \itensor X$.
If there exists some $\mu < \lambda$ for which $P_\mu|_M$ is not strictly
singular, we can assume that $P_\mu|_M$ is an isomorphism by passing
to a further subspace of~$M$ if necessary and then $P_\mu(M)$, seen as a
subspace of $C(K_\mu,X)$, contains an
infinite-dimensional subspace complemented in $C(K_\mu,X)$ by the induction
hypothesis (with $\map{\varphi|_{K_\mu}}{K_\mu}{[0,\mu]}$) so $M$ contains
an infinite-dimensional subspace complemented in~$C(K,X)$
\cite[Proposition 2.3]{oikhberg-spinu}.
This must necessarily be the case if $\lambda$ is not a limit ordinal,
as then there exists some ordinal~$\mu$ such that $\lambda = \mu + 1$,
for which $P_\mu$ is the identity on~$M$ because functions in~$M$
vanish at $\varphi^{-1}(\lambda)$.

Otherwise, if $M \subseteq \tilde\varphi(C_0([0,\lambda])) \itensor X$
but $P_\mu$ is strictly singular for every $\mu < \lambda$, then $\lambda$
must be a limit ordinal by the previous sentence. Also, for every
$\mu < \lambda$ and $\eps > 0$, there exists $f \in M$ such that
$\|f\| = 1$ and $\|P_\mu(f)\| < \eps$, and then there is $\nu > \mu$
such that $\|P_\nu(f) - f\| < \eps$.
By induction, starting with an arbitrary $\mu_1 < \lambda$,
there exists a strictly increasing sequence of ordinals $\mu_1 < \mu_2 <
\cdots < \lambda$ and a sequence $(f_n)_{n\in\N}$ of normalised functions
in~$M$ such that $\|P_{\mu_n}(f_n)\| < 2^{-n} / 32$ and
$\|P_{\mu_{n+1}}(f_n) - f_n\| < 2^{-n} / 32$ for every $n\in\N$.

Now, for each $n\in\N$, there exist $t_n \in K$ such that $\|f_n(t_n)\| = 1$
and then a normalised $x^*_n \in X^*$ such that $x^*_n(f_n(t_n)) = 1$;
note that $t_n \in K_{\mu_{n+1}} \setminus K_{\mu_n}$ because $f_n$ cannot
attain its norm outside of $K_{\mu_{n+1}} \setminus K_{\mu_n}$.
As $K$ is sequentially compact, by passing to a subsequence, we may assume
that $(t_n)_{n\in\N}$ converges to some $t_\infty \in K$. Let
$\map Q{C(K,X)}{c_0}$ and $\map J{c_0}{C(K,X)}$ be the operators defined as
$$Q(f) = \bigl( x^*_n \bigl( f(t_n) - f(t_\infty) \bigr) \bigr)_{n\in\N},$$
$$J \bigl( (\alpha_n)_{n\in\N} \bigr) =
  \sum_{n=1}^\infty \alpha_n (P_{\mu_{n+1}} - P_{\mu_n})(f_n) =
  \sum_{n=1}^\infty \alpha_n f_n \chi_{K_{\mu_{n+1}} \setminus K_{\mu_n}};$$
then $Q$ and~$J$ are well defined, $\|Q\| = 2$ and $J$ is an isometry
into $C(K,X)$ (into $\tilde\varphi(C_0([0,\lambda])) \itensor X$, actually),
and $QJ$ is the identity on~$c_0$, so $JQ$ is a projection in $C(K,X)$
with range isometric to~$c_0$. And, since
\begin{eqnarray*}
  \sum_{n=1}^\infty \| (P_{\mu_{n+1}} - P_{\mu_n})(f_n) - f_n \|
    &\leq& \sum_{n=1}^\infty \bigl( \|P_{\mu_{n+1}}(f_n) - f_n\|
        + \|P_{\mu_n}(f_n)\| \bigr) \\
    &<& \sum_{n=1}^\infty 2^{-n}/16 = 1/16,
\end{eqnarray*}
it follows that $[f_n : n\in\N]$ is also isomorphic to~$c_0$ and complemented
in $C(K,X)$
\cite[Proposition 1.a.9]{lt1}.

Finally, let $Z = \tilde\varphi(C([0,\lambda])) \itensor X$, which is a
closed subspace of $C(K,X)$, fix $t_0 \in \varphi^{-1}(\lambda) \subseteq K$
and consider the natural projection $\map{P}{C(K,X)}{C(K,X)}$ defined as
$P(f) = 1 \otimes f(t_0) \in C(K) \itensor X$; then $R(P)$ is isomorphic
to~$X$, which is subprojective by hypothesis, and $R(P) \subseteq Z$.
Moreover, given $f \in Z \cap N(P)$, it holds that $f(t_0) = 0$ and
$f \in \tilde\varphi(C([0,\lambda])) \itensor X$, so $f$ must be constant
over $\varphi^{-1}(\lambda)$ and then $f|_{\varphi^{-1}(\lambda)} = 0$.
This means that  $Z \cap N(P) = \tilde\varphi(C_0([0,\lambda])) \itensor X$,
which is subprojective with respect to $C(K,X)$. Applying
Proposition~\ref{prev}, $Z = \tilde\varphi(C([0,\lambda])) \itensor X$
is subprojective with respect to $C(K,X)$.
\end{proof}

\begin{Lemma}
\label{lemma}
Let $X$, $Y$ be Banach spaces and let $M$ be a closed separable subspace of
$X \itensor Y$. Then there exist closed separable subspaces $M_X \subseteq X$
and $M_Y \subseteq Y$ such that $M \subseteq M_X \itensor M_Y$ as a
subspace of $X \itensor Y$.
\end{Lemma}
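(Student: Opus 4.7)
The plan is to exploit the density of finite-rank elementary tensors in the injective tensor product together with the stated injectivity property of $\itensor$ with respect to subspaces.

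First I would fix a countable dense subset $(z_k)_{k\in\N}$ of $M$, which exists because $M$ is separable. Since $X\otimes Y$ is dense in $X\itensor Y$, for each $k\in\N$ I can choose a sequence $(u_{k,m})_{m\in\N}$ in $X\otimes Y$ converging in norm to~$z_k$, where each $u_{k,m}$ is a finite sum $\sum_{i=1}^{n_{k,m}} x_{k,m,i}\otimes y_{k,m,i}$ with $x_{k,m,i}\in X$ and $y_{k,m,i}\in Y$. The collection of all the vectors $x_{k,m,i}$ is countable, and likewise for the $y_{k,m,i}$, so one can let $M_X$ be the closed linear span in~$X$ of all the $x_{k,m,i}$ and $M_Y$ the closed linear span in~$Y$ of all the $y_{k,m,i}$; both $M_X$ and~$M_Y$ are then closed separable subspaces.

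Next I would invoke the fact, mentioned in the paper, that $M_X\itensor M_Y$ embeds isometrically as a closed subspace of $X\itensor Y$. By construction, every approximant $u_{k,m}$ lies in $M_X\otimes M_Y \subseteq M_X\itensor M_Y$, viewed inside $X\itensor Y$. Since $M_X\itensor M_Y$ is closed in $X\itensor Y$, the limit $z_k = \lim_m u_{k,m}$ also belongs to $M_X\itensor M_Y$ for every $k\in\N$. Thus the dense set $(z_k)_{k\in\N}$ of $M$ lies in $M_X\itensor M_Y$, and taking closures gives $M\subseteq M_X\itensor M_Y$.

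There is essentially no obstacle here beyond bookkeeping: the only non-trivial ingredient is the injectivity of $\itensor$ on subspaces, which allows us to regard $M_X\itensor M_Y$ as a closed subspace of $X\itensor Y$ and therefore to pass from the finite-rank approximants sitting in $M_X\otimes M_Y$ to their limits inside the same ambient space. Everything else is a direct density/separability argument.
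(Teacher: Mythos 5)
Your argument is correct and is essentially the same as the paper's: both approximate a countable dense subset of $M$ by elements of $X\otimes Y$, take $M_X$ and $M_Y$ to be the closed spans of the (countably many) factors appearing in those approximants, and then use the isometric embedding of $M_X\itensor M_Y$ into $X\itensor Y$ to conclude that the limits, and hence all of $M$, lie in $M_X\itensor M_Y$.
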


\begin{proof}
$M$ is separable, so there exists $(z_m)_{m\in\N}$ in~$M$ such that
$M = [ z_m : m\in\N ]$ and then, for each $m\in\N$, there exist
$(x_{m,n})_{n\in\N}$ in~$X$ and $(y_{m,n})_{n\in\N}$ in~$Y$ such that
$z_m \in [ x_{m,n} \otimes y_{m,n} : n\in\N ]$, so $M \subseteq
[ x_{m,n} \otimes y_{m,n} : m, n\in\N ]$. Let $M_X = [ x_{m,n} : m, n\in\N ]
\subseteq X$ and $M_Y = [ y_{m,n} : m, n\in\N ] \subseteq Y$; then $M_X$ and
$M_Y$ are separable and $M \subseteq M_X \itensor M_Y$, which can be seen as
a subspace of $X \itensor Y$ \cite[Proposition 3.2]{ryan}.
\end{proof}

\begin{Thm}
Let $K$ be a scattered compact space and let $X$ be a subprojective Banach
space. Then $C(K,X)$ is subprojective.
\end{Thm}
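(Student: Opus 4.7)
The plan is to reduce the general setting to the one handled by Proposition~\ref{crux}. Given a closed infinite-dimensional subspace $M \subseteq C(K,X)$, I may assume $M$ is separable, because the closed linear span of any countable linearly independent sequence in~$M$ is a closed separable infinite-dimensional subspace of~$M$, and finding an infinite-dimensional subspace of it complemented in $C(K,X)$ would already suffice. Applying Lemma~\ref{lemma} to $M \subseteq C(K) \itensor X$ then yields separable closed subspaces $C \subseteq C(K)$ and $Y \subseteq X$ with $M \subseteq C \itensor Y$ as a subspace of $C(K) \itensor X$.

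The next step is to realise $C$ as coming from a metric quotient of~$K$. Taking a countable dense sequence $(f_n)_{n\in\N}$ in~$C$, the map $\varphi(t) = (f_n(t))_{n\in\N}$ sends $K$ continuously into the compact metric product $\prod_n [-\|f_n\|, \|f_n\|]$; setting $L = \varphi(K)$ gives a continuous surjection $\map\varphi K L$ onto a compact metric space, and since each $f_n = \pi_n \circ \varphi$ factors through~$\varphi$ we have $C \subseteq \tilde\varphi(C(L))$. The space $L$ inherits scatteredness from~$K$: by the Pelczy\'nski--Semadeni characterisation, $K$ scattered means every element of $C(K)^*$ is a purely atomic measure, and this property passes to $C(L)^*$ under the surjection $\tilde\varphi^*$ since the pushforward of an atomic measure is atomic. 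A scattered compact metric space is countable (by Cantor--Bendixson), hence by the Mazurkiewicz--Sierpi\'nski theorem $L$ is homeomorphic to $[0,\lambda]$ for some countable ordinal~$\lambda$; I identify $L$ with this interval.

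Finally, I would invoke the classical fact that every scattered compact Hausdorff space is sequentially compact, so that $\map\varphi K{[0,\lambda]}$ satisfies the remaining hypothesis of Proposition~\ref{crux}. That proposition then delivers that $\tilde\varphi(C([0,\lambda])) \itensor X$ is subprojective with respect to $C(K,X)$, and since $M \subseteq C \itensor Y \subseteq \tilde\varphi(C([0,\lambda])) \itensor X$, $M$ contains an infinite-dimensional subspace complemented in $C(K,X)$, as required. The main obstacle I expect is exactly this transition from ``scattered'' to ``sequentially compact'' for~$K$, together with the identification of~$L$ with an ordinal interval; both are classical properties of scattered compacta that need to be invoked cleanly, after which the heavy lifting done in Propositions~\ref{prev} and~\ref{crux} finishes the argument.
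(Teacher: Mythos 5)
Your proof is correct and follows essentially the same route as the paper: separable reduction, Lemma~\ref{lemma}, factoring the separable subspace of $C(K)$ through a metrizable scattered quotient $L$ of $K$ that is homeomorphic to a countable ordinal interval, and then Proposition~\ref{crux}. The only differences are cosmetic: you construct $\varphi$ directly by evaluating a dense sequence of functions into a metric cube (where the paper passes to the generated unital self-adjoint subalgebra and invokes Eidelheit's theorem), you establish that $L$ is scattered via purely atomic measures rather than citing that continuous images of scattered compacta are scattered, and you rightly make explicit the sequential compactness of scattered compacta that the paper leaves implicit when applying Proposition~\ref{crux}.
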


\begin{proof}
Let $M$ be a closed infinite-dimensional subspace of~$C(K,X)$, which we can
assume to be separable without loss of generality.
By Lemma~\ref{lemma}, there exist separable subspaces $G \subseteq C(K)$
and $Z \subseteq X$ such that $M \subseteq G \itensor Z \subseteq
C(K) \itensor X \equiv C(K,X)$. Without loss of generality, we can replace
$G$ with the least closed self-adjoint subalgebra with unit of~$C(K)$ that
contains it, as this is still separable, and then there exists a compact
space~$L$ and a continuous surjection $\map\varphi KL$ such that
$G = \tilde\varphi(C(L))$ 
\cite{eidelheit} \cite[Theorem 7.5.2]{semadeni},
so $C(L)$ is isomorphic to~$G$, which is separable, and this means in turn
that $L$ is metrisable.
Under these conditions, $L$ is scattered \cite[Lemma 2.5.1]{lacey}
and so homeomorphic to $[0,\lambda]$ for some countable ordinal~$\lambda$
\cite[Corollary 2.5.2]{lacey}.
By Proposition~\ref{crux}, $M$ contains an infinite-dimensional subspace
complemented in $C(K,X)$.
\end{proof}


\end{document}